\newcommand{\comment}[1]{}
\newcommand{\Ps}{\mathbf{P}} 
\newcommand{\Q}{\mathbf{Q}}
\newcommand{\R}{\mathbf{R}}
\newcommand{\m}{\mathfrak{m}}
\newcommand{\kv}{\mathrm{k}}
\newcommand{\Frob}{\mathrm{Frob}}
\DeclareMathOperator{\im}{im}
\DeclareMathOperator{\Gal}{Gal}
\DeclareMathOperator{\Hom}{Hom}
\DeclareMathOperator{\Aut}{Aut}
\DeclareMathOperator{\Dv}{D}
\DeclareMathOperator{\Iv}{I}
\DeclareMathOperator{\Graph}{Graph}
\theoremstyle{plain}
\newtheorem{theorem}{Theorem}[section]
\newtheorem{corollary}[theorem]{Corollary}
\newtheorem{lemma}[theorem]{Lemma}
\newtheorem{proposition}[theorem]{Proposition}
\theoremstyle{definition}
\newtheorem{definition}[theorem]{Definition}
\newtheorem{problem}{Problem}
\begin{document}

\title{Images of polynomial maps on large fields}
\author{Michiel Kosters}
%\author{Anonymous}
\address{Mathematisch Instituut
P.O. Box 9512
2300 RA Leiden
the Netherlands}
\email{mkosters@math.leidenuniv.nl}
\urladdr{www.math.leidenuniv.nl/~mkosters}
\date{\today}
\thanks{This article is part of my PhD thesis written under the supervision of Hendrik Lenstra.}
\subjclass[2010]{12-02, 12J10, 11R58}

\begin{abstract}
A field $k$ is called large if every irreducible $k$-curve with a $k$-rational smooth point has infinitely many $k$-points. 
Let $k$ be a perfect large field and let $f \in k[x]$. Consider the evaluation map $f_k: k \to k$. Assume that $f_k$ is not surjective. We will show that $k \setminus f_k(k)$ is infinite. This conclusion follows from a similar statement about finite morphisms between normal projective curves over perfect large fields.
\end{abstract}

\maketitle
%\tableofcontents

\section{Introduction}

\begin{problem}
Let $k$ be an infinite field and let $f \in k[x]$. Consider the evaluation map $f_k: k \to k$. Assume that $f_k$ is not surjective. Is $k \setminus f_k(k)$ infinite?
\end{problem}

This question was asked by Philipp Lampe on mathoverflow as
\href{http://mathoverflow.net/questions/6820/can-a-non-surjective-polynomial-map-from-an-infinite-field-to-itself-miss-only-f/7244#7244}{Question
6820} in 2009. The problem is still unsolved in general. It is not hard to see that the problem has a positive answer when $k$ is Hilbertian. Number fields are for example Hilbertian. In this article we will positively answer this problem for perfect large fields. 
Let us first define what a large field is.

\begin{definition}
A field $k$ is called \emph{large}\index{large field} if every irreducible $k$-curve $C$ with a $k$-rational smooth point has infinitely many $k$-points.
\end{definition}

Note that large fields are infinite. For more information on large fields see the survey \cite{POP}. Some examples of large fields are $\R$, $\Q_p$ ($p$ prime), $l((t))$ (where $l$ is a field), infinite algebraic extensions of finite fields. Furthermore, finite extensions of large fields are large. In the definition of large field we can replace the word smooth by normal if $k$ is perfect (\cite[Corollary 4.3.33 and
Lemma 8.2.21]{LIU})

Our main theorem is the following (we follow \cite{LIU} for the definition of a normal projective curve).

\begin{theorem} \label{25c12}
 Let $k$ be a perfect large field. Let $C, D$ be normal projective curves over $k$. Let $f \colon C \to D$ be a finite morphism. Suppose that the induced map
$f_k \colon C(k) \to D(k)$ is not surjective. Then one has $|D(k) \setminus f_k(C(k))|=|k|$.
\end{theorem}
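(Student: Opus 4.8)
The plan is to reduce to a local deformation problem near a single omitted point and then use the large-field axiom to spread that one omitted point into $|k|$ of them. I begin with the basic criterion: for $Q \in D(k)$ one has $Q \in f_k(C(k))$ if and only if the scheme-theoretic fibre $f^{-1}(Q) = C \times_{D,Q} \Spec k$ has a $k$-rational point, i.e. some point of $C$ over $Q$ has residue field $k$. Now for the reductions. Since $k$ is perfect, I factor $f$ as $C \to C' \to D$ with $C \to C'$ purely inseparable and $C' \to D$ separable (generically étale); a purely inseparable finite morphism is a universal homeomorphism and, as $k$ is perfect, induces a bijection on $k$-points, so I may replace $C$ by $C'$ and assume $f$ separable. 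I may also discard the connected components of $C$ whose constant field is a proper extension of $k$, since those carry no $k$-points, so that every surviving component is geometrically connected; and $f$ is then étale over a dense open $D^\circ\subseteq D$ with finite (branch) complement.

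Fix an omitted point $Q_0\in D(k)\setminus f_k(C(k))$; after a preliminary reduction I arrange $Q_0\in D^\circ(k)$. Let $t$ be a uniformizer of $D$ at $Q_0$ and write $f^{-1}(Q_0)=\Spec\prod_j K_j$ with each $K_j/k$ finite separable and $[K_j:k]\ge 2$. Base-changing along the formal branch $\Spec k[[t]]\to D$ through $Q_0$ and using that $f$ is étale there, Hensel's lemma identifies the pulled-back cover with $\coprod_j\Spec K_j[[t]]\to\Spec k[[t]]$, which has no section; equivalently, the tautological point $Q_t\in D(k((t)))$ lies in the complement of $f\bigl(C(k((t)))\bigr)$. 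This confirms that the complement "persists under deformation," and the task becomes converting it into $|k|$ honest $k$-points. The plan is to package the phenomenon existentially: construct a smooth geometrically integral $k$-curve $Y$ together with a dominant map $\psi\colon Y\to D$ such that (i) $Y$ carries a $k$-point over $Q_0$ — available because the non-split fibre datum over $Q_0$ is itself defined over $k$ — and (ii) $\psi(Y(k))\subseteq D(k)\setminus f_k(C(k))$. Granting such a $Y$, the large-field axiom forces $Y(k)$ to be infinite, and as $\psi$ has finite fibres this already shows the complement is infinite.

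The main obstacle is step (ii): I must guarantee that as $Q_0$ deforms along $\psi$, \emph{every} point of $C$ over the moving fibre stays non-rational, not merely the single branch visible over $k((t))$. Controlling all points of the fibre simultaneously is naturally done through the Galois closure $\tilde C\to D$, with group $G$ and $C=\tilde C/H$: over $D^\circ$ a point $Q$ lies in the image exactly when the arithmetic monodromy section $\Gal(\bar k/k)\to G$ attached to $Q$ carries $\Gal(\bar k/k)$ into a $G$-conjugate of a coset fixed by $H$, and the real work is to exhibit a one-parameter family of such sections through the one belonging to $Q_0$ on which this fixed-coset condition never holds; this is precisely what $Y$ should parametrize. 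Producing $Y$ (rather than only the formal/$k((t))$ witness, from which the \emph{universal} "empty-fibre" assertion does not descend by existential closedness) is the crux of the argument. Finally I would upgrade "infinite" to exactly $|k|$ by the standard strengthening of the large-field property, namely that a smooth geometrically integral $k$-curve with a rational point has precisely $|k|$ rational points (proved via a fibration over $\mathbb{A}^1$ together with existential closedness of $k$ in $k((t))$); applied to $Y$ this gives $|Y(k)|=|k|$ and hence $|D(k)\setminus f_k(C(k))|=|k|$, the reverse inequality $\le|k|$ being immediate since $D$ embeds in a projective space over the infinite field $k$.
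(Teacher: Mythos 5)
Your overall skeleton matches the paper's strategy: fix an omitted point, build a geometrically integral auxiliary $k$-curve $Y \to D$ carrying a rational point over it, whose rational points land (up to finitely many exceptions) in $D(k)\setminus f_k(C(k))$, then invoke largeness and the fact that a geometrically integral curve over a large field with a rational point has exactly $|k|$ rational points. But you have left precisely the crux unproved: you yourself write that producing $Y$ ``is the crux of the argument,'' and you never construct it, so the proposal is not a proof --- all of the paper's content lives in that construction. In the paper's function-field language (Theorem \ref{2c411}), one takes a finite normal extension $M/K$ with $X=\Hom_K(L,M)\neq\emptyset$, passes to $\overline{k}M$, and uses the splitting of
\begin{align*}
0 \to \Iv_{\overline{Q},K} \to \Dv_{\overline{Q},K} \to \Gamma \to 0
\end{align*}
(valid because the residue field of $\overline{Q}$ is algebraically closed, Theorem \ref{984}ii) to obtain a continuous section whose image is the graph of a map $\varphi\colon \Gamma\to G$ satisfying $\im(\varphi)\Iv_{Q,K}=\Dv_{Q,K}$; the sought curve is then the twist $M^{\varphi}=(\overline{k}M)^{\Graph(\varphi)}$. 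Your property (i), a rational point over the omitted point, is exactly Proposition \ref{25c8888}, and your property (ii) is Lemma \ref{25c9999}: away from the finitely many ramified points, every rational point $P'$ of $M^{\varphi}$ lies under a prime $Q'$ of $M$ with $\Dv_{Q',K}=\im(\varphi)$, whence $X^{\Dv_{Q',K}}=X^{\im(\varphi)}=\emptyset$ and $P'$ is again omitted. Note that your conditions (i) and (ii) pull against each other --- a curve whose rational points all avoid the image has no a priori reason to possess any rational point --- and it is exactly the graph/twisting construction, resting on the splitting above, that reconciles them.

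A second, smaller but genuine, flaw: you cannot ``arrange $Q_0\in D^\circ(k)$ after a preliminary reduction.'' The omitted point is given; if it happens to be a branch point of $f$, there is no way to trade it for an unramified omitted point, since the existence of any other omitted point is precisely what is being proved. The paper handles a possibly ramified $Q_0$ through the inertia condition built into the definition of a Frobenius, $\im(\varphi)\Iv_{Q,K}=\Dv_{Q,K}$, combined with the criterion of Proposition \ref{1c388}: $P\notin\im(f_k)$ gives $\left(\Iv_{Q,K}\backslash X\right)^{\Dv_{Q,K}/\Iv_{Q,K}}=\emptyset$, which in turn forces $X^{\im(\varphi)}=\emptyset$, and it is this last statement that propagates to the unramified points $P'$. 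So ramification at the starting point is not a nuisance to be normalized away; it is absorbed into the group theory. (Your $k((t))$/Hensel discussion is fine as motivation, and you rightly observe that existential closedness of $k$ in $k((t))$ cannot by itself descend the empty-fibre condition, but neither observation advances the proof.)
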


The following corollary gives a positive answer to our problem for perfect large fields, since large fields are infinite.

\begin{corollary} \label{25c14}
Let $k$ be a perfect large field.
Then the following hold.
\begin{enumerate}
 \item Let $f \in k(x)$ such that the induced map $f_k \colon \Ps^1_k(k) \to \Ps^1_k(k)$ is not surjective. Then one has $|\Ps^1_k(k) \setminus f_k(\Ps^1_k(k))|=|k|$.
 \item Let $f \in k[x]$ such that the induced evaluation map $f_k \colon k \to k$ is not surjective. Then one has $|k \setminus f_k(k)|=|k|$.
\end{enumerate}
\end{corollary}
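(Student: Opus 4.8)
The plan is to reduce the theorem to a statement about the Galois closure of $f$, and then to manufacture a full $|k|$-sized family of un-hit points out of the single given un-hit point by a Galois-twisting construction, at which stage the largeness of $k$ is exactly what produces infinitely many rational points.

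First, the reductions. A variety over an infinite field $k$ has at most $|k|$ rational points, so $|D(k)| \le |k|$ and it suffices to produce $|k|$ points of $D(k)$ missed by $f_k$. I may assume $D$ is integral (treat the connected components of $D$ separately; since $k$ is perfect and $D$ is normal with a $k$-point $d_0$, the relevant component is geometrically integral) and replace $C$ by the union of its components dominating $D$ (components with finite image contribute only finitely many points to $f_k(C(k))$). Because $k$ is perfect, Frobenius is bijective on $k$, so a purely inseparable morphism is a bijection on $k$-points; factoring $f$ into its inseparable and separable parts reduces us to $f$ separable. Finally the branch locus is finite, and after a technical reduction I may take the given missed point $d_0 \in D(k) \setminus f_k(C(k))$ to lie in the \'etale locus of $f$.

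Second, I pass to the Galois closure. Let $\tilde D \to D$ be the normal projective curve dominating $C$ and $D$ whose function field is a Galois closure of $k(C)/k(D)$, with Galois group $G$, and let $X$ denote the geometric fibre of $C \to D$ viewed as a finite $G$-set (so $X = G/H$ when $C$ is integral, $H$ the subgroup cutting out $k(C)$). For $d \in D(k)$ in the \'etale locus, $f^{-1}(d)$ is a finite \'etale $k$-scheme and the absolute Galois group $\Gamma = \Gal(k^{\sep}/k)$ acts on $X$ through a homomorphism $\Gamma \to G$ whose image $G_d$ is the decomposition group at $d$, well defined up to conjugacy. The key translation is that $d \in f_k(C(k))$ if and only if $G_d$ has a fixed point on $X$, i.e.\ $G_d$ is subconjugate to a point-stabiliser; in particular the hypothesis that $d_0$ is missed says precisely that $G_0 := G_{d_0}$ has \emph{no} fixed point on $X$.

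Third, and this is the crux, I turn the single missed point into a family. The decomposition homomorphism at $d_0$ is a genuine homomorphism $c \colon \Gamma \to G$ with image $G_0$; using it as a $1$-cocycle (the $G$-action on $\tilde D$ is defined over $k$) I twist $\tilde D$ to obtain a new normal projective $k$-curve ${}^{c}\tilde D$ together with a $k$-morphism ${}^{c}\tilde D \to D$ inherited from the $G$-invariant map $\tilde D \to D$. By construction the chosen geometric point over $d_0$ becomes a smooth $k$-rational point of ${}^{c}\tilde D$. Now largeness enters: the component carrying this point is a curve with a smooth $k$-point, so it has $|k|$ rational points. For a rational point $\tilde d$ of ${}^{c}\tilde D$ lying over the \'etale locus with trivial stabiliser (all but finitely many), unwinding the twist shows that its image $d \in D(k)$ again has decomposition group $G_d$ conjugate to $G_0$; since $G_0$ has no fixed point on $X$, neither does $G_d$, so $d$ is missed by $f_k$. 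As ${}^{c}\tilde D \to D$ has finite fibres, these images form a set of cardinality $|k|$ inside $D(k) \setminus f_k(C(k))$, which together with $|D(k)| \le |k|$ gives the theorem.

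I expect the main obstacle to be exactly this third step: converting the qualitative fact ``$d_0$ is missed'' into a quantitative family of missed points. The device that makes it work is the twist of the Galois cover by the decomposition cocycle, which repackages the arithmetic condition ``the fibre has no rational point'' as ``a certain twisted curve has a rational point'' -- the one kind of statement the largeness hypothesis is designed to exploit. A secondary, more technical difficulty is the bookkeeping at the branch locus, namely ensuring one may seed the construction at an unramified missed point (or else adapting the cocycle to incorporate inertia); but this is subordinate to the main construction.
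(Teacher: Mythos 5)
Your proposal is not, strictly speaking, a proof of Corollary~\ref{25c14}: it is a blind re-proof of Theorem~\ref{25c12}, and it omits the (admittedly routine) steps that are specific to the corollary, namely the case of constant $f$ in (i) and the deduction of (ii) from (i) via the observation that a polynomial sends $\infty$ to $\infty$, so that every missed point of $\Ps^1_k(k)$ lies in $k$. That mismatch is harmless. More interestingly, your core construction is, despite the cohomological language, the same as the paper's proof of Theorem~\ref{25c12}: your twisted curve ${}^{c}\tilde D$ is precisely the curve whose function field is $M^{\varphi}=(\overline{k}M)^{\Graph(\varphi)}$ in the paper, the statement that the chosen geometric point over $d_0$ becomes rational on the twist is Proposition~\ref{25c8888}, your ``unwinding'' step at an unramified point with trivial stabiliser is Lemma~\ref{25c9999}, and the appeal to largeness together with Jarden's Proposition 5.4.3 to get $|k|$ rather than merely infinitely many points is identical.

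The genuine gap is your first ``technical reduction'': you may \emph{not} assume that the given missed point $d_0$ lies in the \'etale locus of $f$. A priori, the (finitely many) branch points could be exactly the missed points, which is perfectly consistent with non-surjectivity; producing a missed point in the \'etale locus from a missed branch point is tantamount to the theorem itself, so the reduction is circular, and your construction then has no valid seed. This is exactly the case the paper's machinery exists to handle: its Frobenius $\varphi$ is required to satisfy only $\im(\varphi)\Iv_{Q,K}=\Dv_{Q,K}$ rather than $\im(\varphi)=\Dv_{Q,K}$; its existence (Proposition~\ref{25c8888}) rests on the splitting of $0\to\Iv_{\overline{Q},K}\to\Dv_{\overline{Q},K}\to\Gamma\to 0$, which holds because the residue field upstairs is algebraically closed (Theorem~\ref{984}ii); and the criterion for $P$ being missed at a possibly ramified point is the count $\#\left(\Iv_{Q,K}\backslash X\right)^{\Dv_{Q,K}/\Iv_{Q,K}}$ of Proposition~\ref{1c388}, which requires $k$ perfect and which yields $X^{\im(\varphi)}=\emptyset$ precisely because $\im(\varphi)\Iv_{Q,K}=\Dv_{Q,K}$. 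So the alternative you mention in passing, ``adapting the cocycle to incorporate inertia,'' is not subordinate bookkeeping; it is where the perfectness hypothesis and the valuation-theoretic input do their real work, and without it your argument is incomplete.
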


\section{Prerequisites}

\subsection{Valuations and curves}

\begin{definition}
Let $K$ be a field. Then a \emph{valuation ring} on $K$ is a subring $\mathcal{O}$ of $K$ such that for all $x \in K^*$ one has $x \in \mathcal{O}$ or $x^{-1} \in \mathcal{O}$. 
\end{definition}

Such a valuation ring is a local ring and it is integrally closed in $K$. For notational convenience, we often say that $P$ is a valuation with corresponding valuation ring $\mathcal{O}_P$, maximal ideal $\m_P$ and residue field $\kv_P$. We say that $(K,P)$ is a valued field.

Let $(K,P)$ be a valued field. Let $L$ be a field extension of $K$. A valuation $Q$ on $L$ is an extension of $P$ if $\mathcal{O}_Q \cap K = \mathcal{O}_P$. We denote such an extension by $(K,P) \subseteq (L,Q)$. Such an extension is called finite (Galois, normal, \ldots) if $L/K$ is finite (Galois, normal, \ldots).

\begin{definition}
Let $k$ be a field. A \emph{function field} over $k$ is a finitely generated field extension of $k$ of transcendence degree $1$.
\end{definition}

\begin{theorem} \label{2c411}
There is an anti-equivalence of categories between the category of normal projective curves over $k$ with finite morphisms
and the category of function fields over $k$ with finite $k$-morphisms of fields. This equivalence maps a curve $C$ to its function field $k(C)$ and a finite morphism $C \to D$ to the induced inclusion $k(D) \subseteq k(C)$. 
\end{theorem}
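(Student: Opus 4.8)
The plan is to exhibit explicit functors in both directions and check that they are mutually quasi-inverse, letting the valuation theory of Section 2 carry the bookkeeping. In the direction from geometry to fields, the assignment sending a curve $C$ to $k(C)$ and a finite morphism $f\colon C \to D$ to the pullback $f^\#\colon k(D) \hookrightarrow k(C)$ is plainly contravariant and functorial. Two points make it land in the stated target category: a finite morphism of irreducible curves is dominant (its fibres are finite, so its closed image cannot be a point), so $f^\#$ is defined; and since $k(C)$ and $k(D)$ have transcendence degree $1$ over $k$, the inclusion $f^\#$ realizes $k(C)$ as a finite extension of $k(D)$.

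For a quasi-inverse I would build the curve out of valuations. Given a function field $K/k$, let $C_K$ be the set of valuation rings $\Oc$ of $K$ with $k \subseteq \Oc \subsetneq K$; since $\trdeg_k K = 1$, each such $\Oc$ is automatically a discrete valuation ring with residue field finite over $k$, so these are the prospective closed points. Topologize $C_K$ by the cofinite topology together with a generic point $\eta$, and define the structure sheaf over an open $U$ as $\bigcap_{P \in U} \Oc_P$; one verifies this is an integral normal one-dimensional scheme with $k(C_K) = K$. On morphisms, a finite $k$\dash embedding $\phi\colon K' \hookrightarrow K$ sends each valuation ring $\Oc_P \subseteq K$ to the restriction $\Oc_P \cap \phi(K')$, which is again a nontrivial valuation ring of $K'$ over $k$ because $K/\phi(K')$ is algebraic; this defines the underlying map of spaces, which one promotes to a morphism of locally ringed spaces inducing $\phi$ on function fields.

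Faithfulness is then formal: a finite morphism of curves is dominant, hence recovered from its value at the generic point, and two morphisms to the separated scheme $D$ agreeing at the dense point $\eta_C$ of the reduced scheme $C$ must coincide. Fullness and essential surjectivity are supplied by the two constructions above, \emph{provided} one knows that the object $C_K$ is genuinely projective and that the morphism attached to $\phi$ is genuinely finite.

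Those last two points are where the real content lies, as opposed to valuation bookkeeping. I expect the main obstacle to be the finiteness of the integral closure $\Oc_C$ as an $\Oc_D$-module, which is also what underlies projectivity. The efficient route to both at once is to fix a transcendental $t \in K$, realize $K$ as a finite extension of $k(t)$, and identify $C_K$ with the set of closed points of the normalization of $\Ps^1_k$ in $K$; invoking the finiteness of normalization for function fields (citing \cite{LIU}) then simultaneously produces a finite map $C_K \to \Ps^1_k$, shows that $C_K$ is projective, and, applied instead to $K/\phi(K')$, shows that the morphism built from $\phi$ is finite.
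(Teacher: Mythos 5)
Your sketch is correct in outline, but the comparison here is lopsided: the paper does not prove this theorem at all --- its entire proof is the citation ``See \cite[Proposition 7.3.13]{LIU}.'' What you have written is, in substance, a reconstruction of the standard proof of that cited result. Your two functors are the usual ones, and you correctly isolate the only load-bearing steps: that the valuation-theoretic model $C_K$ is projective, and that the morphism induced by a finite field embedding is finite. Your reduction of both to finiteness of integral closure (normalize $\Ps^1_k$, respectively $C_{K'}$, in $K$) is exactly the device used in the reference. What the paper's citation buys is rigor on the points your sketch leaves as asserted bookkeeping: that every valuation ring $k \subseteq \Oc \subsetneq K$ is automatically a discrete valuation ring with residue field finite over $k$ (this needs an argument, e.g.\ via the Abhyankar inequality, or by first producing the normalization and showing every such $\Oc$ is one of its local rings); that the cofinite-topology construction really satisfies the scheme axioms; and --- the thinnest point in your write-up --- that for a \emph{given} normal projective curve $C$ there is a natural isomorphism $C \cong C_{k(C)}$, which is where the valuative criterion of properness and the maximality of discrete valuation rings among proper subrings of $k(C)$ enter. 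This last identification is needed for fullness and for the functors to be quasi-inverse, not merely for essential surjectivity, so it should be stated explicitly rather than folded into ``supplied by the two constructions above.'' What your presentation buys in return is coherence with the rest of the paper: it makes explicit the dictionary, stated right after this theorem in Section 2, between closed points of $C$ and the set $\mathcal{P}_{K/k}$ of nontrivial valuations of $k(C)$ trivial on $k$, which is the form in which the theorem is actually used in the proofs of Proposition \ref{25c8888}, Lemma \ref{25c9999}, and Theorem \ref{25c12}.
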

\begin{proof}
 See \cite[Proposition 7.3.13]{LIU}. 
\end{proof}

Let $C$ be a normal projective curve over a field $k$. Each closed point $P$ gives rise to a valuation on $k(C)$ which contains $k$ but is not equal to $k(C)$, namely $\mathcal{O}_P$. Conversely, every such valuation corresponds to a point. This set of such valuations is denoted by $\mathcal{P}_{K/k}$. We let $\mathcal{P}^1_{K/k} \subseteq \mathcal{P}_{K/k}$ be the subset consisting of valuations $P$ which satisfy $\kv_P=k$. When $K=k(C)$, the set $\mathcal{P}_{K/k}^1$ corresponds to $C(k)$.

\subsection{Galois theory of valuations}

We refer to \cite{KO7} for a Galois theoretic approach to valuation theory. We recall some of the important statements.

Let $K$ be a field with valuation $P$ and let $M$ be a normal extension of $K$ with group $G=\Aut_K(M)$. Let $Q$ be an extension of $P$ to $M$.
Set $\Dv_{Q,K}=\{g \in G: gQ=Q\}$ (\emph{decomposition group}). Note that we have a natural map $\Dv_{Q,K} \to \Aut_{\kv_P}(\kv_Q)$. The kernel of this map is called the \emph{inertia group} and is denoted by $\Iv_{Q,K}$. 

The following theorem summarizes the results from general valuation theory which we need later on.

\begin{theorem} \label{984}
Let $L$ be an intermediate extension of $M/K$. The following hold:
\begin{enumerate}
\item The set of valuations extending $P$ to $M$ is not empty and the group $G$ acts transitively on it.
\item The sequence 
\begin{eqnarray*}
0 \to \Iv_{Q,K} \to \Dv_{Q,K} \to \Aut_{\kv_P}(\kv_Q) \to 0
\end{eqnarray*}
is exact. It is right split (as topological groups) if $\kv_Q$ is algebraically closed. 
\item One has $\Dv_{Q,L} = \Aut_L(M) \cap \Dv_{Q,K}$ and $\Iv_{Q,L} = \Aut_L(M) \cap \Iv_{Q,K}$.
\item If $L/K$ is Galois, then then the maps $\Dv_{Q,K} \to \Dv_{Q|_L,K}$ and $\Iv_{Q,K} \to \Iv_{Q|_L,K}$ are surjective. 
\end{enumerate}
\end{theorem}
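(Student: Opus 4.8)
The plan is to reduce everything to the case where $M/K$ is finite normal and then pass to the inverse limit. Write $M=\bigcup_\alpha M_\alpha$ as the filtered union of its finite normal subextensions $M_\alpha/K$, so that $G=\varprojlim \Aut_K(M_\alpha)$ is profinite for the Krull topology; all the groups in the statement will then be realized as closed subgroups of $G$ and as inverse limits of their finite-level analogues. Granting the finite case, parts (i)--(iv) for $M$ follow by compactness: the set of extensions of $P$ to $M$ is the inverse limit of the finite sets of extensions to each $M_\alpha$, which are nonempty by finite existence, hence nonempty; and for any two extensions $Q,Q'$ the sets $\{g\in G: gQ|_{M_\alpha}=Q'|_{M_\alpha}\}$ are nonempty closed subsets with the finite intersection property in the compact group $G$, which yields transitivity.

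For the finite normal case I would argue as follows. Existence in (i) is Chevalley's extension theorem: the local ring $\Oc_P$ is dominated by some valuation ring of $M$, obtained by a Zorn's lemma argument on local subrings of $M$ dominating $\Oc_P$. For transitivity, suppose $Q'$ is not in the $G$-orbit of $Q$; the finitely many extensions of $P$ are pairwise incomparable, so the approximation theorem produces $x\in M$ with all conjugates $g(x)\in\Oc_Q$ (equivalently $x\in\bigcap_g \Oc_{gQ}$) yet $x\notin\Oc_{Q'}$. The minimal polynomial of $x$ over $K$ then has coefficients in $\Oc_Q\cap K=\Oc_P$, being symmetric functions of the conjugates, so $x$ is integral over $\Oc_P$; since $\Oc_{Q'}$ is integrally closed in $M$ and contains $\Oc_P$, this forces $x\in\Oc_{Q'}$, a contradiction. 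Parts (iii) and (iv) are then largely formal. For (iii), $\Dv_{Q,L}=\{g\in\Aut_L(M): gQ=Q\}=\Aut_L(M)\cap\Dv_{Q,K}$ by definition, and since both $\Iv_{Q,L}$ and $\Iv_{Q,K}$ are cut out inside the respective decomposition groups by the condition that $g$ act trivially on $\kv_Q$, intersecting gives $\Iv_{Q,L}=\Aut_L(M)\cap\Iv_{Q,K}$. For (iv), restriction $G\to\Aut_K(L)$ is surjective because $L/K$ is normal; given $\sigma\in\Dv_{Q|_L,K}$, lift it to $\tilde\sigma\in G$, observe that $\tilde\sigma Q$ and $Q$ both restrict to $Q|_L$ on $L$, and apply transitivity for $M/L$ to find $h\in\Aut_L(M)$ with $h\tilde\sigma\in\Dv_{Q,K}$; then $h\tilde\sigma$ restricts to $\sigma$, and the inertia statement follows by tracking the induced action on $\kv_Q$.

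The substantive part is (ii). Exactness is immediate from the definition of $\Iv_{Q,K}$ as the kernel of $\Dv_{Q,K}\to\Aut_{\kv_P}(\kv_Q)$, so the content is surjectivity of this map; for that I would first show the residue extension $\kv_Q/\kv_P$ is normal and then lift each residue automorphism into the decomposition group by an approximation argument at the finite level, passing surjectivity through the inverse limit (an inverse limit of surjections of finite groups is surjective). The hard part will be the splitting when $\kv_Q$ is algebraically closed, where one must produce a \emph{continuous} section $s\colon\Aut_{\kv_P}(\kv_Q)\to\Dv_{Q,K}$ of topological groups. My approach would be to build $s$ compatibly over the finite layers $M_\alpha$, exploiting the algebraic closedness of $\kv_Q$ to solve the successive lifting problems (choosing coherent representatives of residues, in the spirit of Teichmüller or Hensel lifts) so that the obstruction to splitting at each stage vanishes, and then to take the inverse limit of these sections. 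Checking that the finite-level sections can be chosen \emph{compatibly}, and that the resulting map is genuinely multiplicative and continuous rather than merely set-theoretic, is where I expect the real difficulty to lie; this is precisely the point where one must lean on the careful development in \cite{KO7}.
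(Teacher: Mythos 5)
Your proposal attempts far more than the paper does: the paper's own ``proof'' of this theorem is nothing but citations to \cite{KO7} (Proposition 5.6 for (i), Theorem 3.8iii for (ii), Theorem 3.6 for (iv)), with only (iii) dispatched as definitional. So any self-contained argument is necessarily a different route, and most of yours is the standard one and sound: Chevalley extension plus the incomparability of distinct extensions of $P$ and the integral-closure trick for transitivity in (i) (one caveat: the full approximation theorem requires pairwise \emph{independent} valuations, which distinct extensions of a higher-rank $P$ need not be; what you actually need is the weaker standard fact that a valuation ring of $M$ containing a finite intersection of pairwise incomparable valuation rings of $M$ must contain one of them); the formal reductions in (iii) and (iv); and the compactness/inverse-limit passage from finite normal subextensions to $M$, since all groups involved are compact and the restriction maps are surjections of compact groups.

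The genuine gap is the splitting assertion in (ii). Your plan --- choose sections of the finite-level sequences $1 \to \Iv_{Q_\alpha,K} \to \Dv_{Q_\alpha,K} \to \Aut_{\kv_P}(\kv_{Q_\alpha}) \to 1$ compatibly and pass to the limit --- founders on the fact that these finite-level sequences need not split at all: algebraic closedness of $\kv_Q$ is a property of the top of the tower and gives nothing at a finite layer, where $\kv_{Q_\alpha}$ is just a finite extension of $\kv_P$. Concretely, by local class field theory $\Q_p$ ($p$ odd) has a cyclic quartic extension with $e=f=2$; there $\Dv \cong \Z/4\Z$, $\Iv \cong 2\Z/4\Z$, and $0 \to 2\Z/4\Z \to \Z/4\Z \to \Z/2\Z \to 0$ does not split, so no choice of ``coherent Teichm\"uller representatives'' can produce your finite-level sections. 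The true statement is global: a section of $\Dv_{Q,K} \to \Aut_{\kv_P}(\kv_Q)$ exists for the full extension, and known proofs construct it globally --- e.g.\ by a Zorn's lemma argument selecting a maximal intermediate valued extension with trivial residue extension and showing that maximality, together with algebraic closedness of $\kv_Q$, forces it to be a closed complement of the inertia group --- not layer by layer, since the finite-level obstructions are genuinely nonzero and only die in the limit. You flag exactly this point as the place where you would ``lean on \cite{KO7}''; since that is the only nontrivial content of the splitting claim (and the one step whose strategy, as stated, would fail), your proposal does not prove (ii) but rather ends where the paper begins: with a citation.
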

\begin{proof}
i: See \cite[Proposition 5.6]{KO7}.

ii: See \cite[Theorem 3.8iii]{KO7}.

iii: Follows directly from the definitions.

iv: See \cite[Theorem 3.6]{KO7}.
\end{proof}

One of the reasons for assuming that $k$ is perfect in our main theorem in the following proposition.

\begin{proposition} \label{1c388}
Let $(K,P)$ be a valued field and let $L$ be a finite algebraic extension of $K$. Assume that $\kv_P$ is perfect. Let $(M,Q) \supseteq (K,P)$ be a finite normal extension of valued fields with group $G=\Aut_K(M)$ such that the $G$-set $X=\Hom_K(L,M)$ is not empty. Then the cardinality of the set of valuations $P'$ on
$L$ extending $P$ such that $\kv_{P'}=\kv_P$ is equal to $\# \left(\Iv_{Q,K} \backslash X \right)^{\Dv_{Q,K}/\Iv_{Q,K}}$.
\end{proposition}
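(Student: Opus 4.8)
The plan is to build a dictionary between valuations on $L$ extending $P$ and group-theoretic orbits on $X$, and then to translate the residue condition $\kv_{P'}=\kv_P$ into a statement about how $\Iv_{Q,K}$ acts on those orbits, with perfectness of $\kv_P$ entering exactly to force the relevant residue extensions to be Galois.

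First I would fix $\sigma_0\in X$, identify $L$ with $\sigma_0(L)\subseteq M$, and set $H=\Aut_L(M)$, $D=\Dv_{Q,K}$, $I=\Iv_{Q,K}$; restriction $g\mapsto g\circ\sigma_0$ identifies $X$ with $G/H$ as left $G$-sets. I would then study the pullback map $\pi\colon X\to\{\text{valuations on }L\text{ extending }P\}$, $\sigma\mapsto\sigma^*Q$, where $\mathcal{O}_{\sigma^*Q}=\sigma^{-1}(\mathcal{O}_Q)$. Using Theorem~\ref{984}(i) twice — transitivity of $G$ on the valuations of $M$ over $P$, and transitivity of $H$ on the valuations of $M$ over a fixed valuation of $L$ — one checks that $\pi$ is surjective and that $\sigma_1^*Q=\sigma_2^*Q$ precisely when the cosets $g_1H,g_2H\in G/H$ lie in one $D$-orbit. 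Hence $\pi$ induces a bijection between $D\backslash X$ and the valuations on $L$ extending $P$; writing $\sigma=g\sigma_0$, the valuation $w=\pi(\sigma)$ is the restriction to $L$ of the valuation $g^{-1}Q$ on $M$.

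Next I would compute the residue field of such a $w$. Put $Q_w=g^{-1}Q$, so $\Dv_{Q_w,K}=g^{-1}Dg$, $\Iv_{Q_w,K}=g^{-1}Ig$, while $\Dv_{Q_w,L}=H\cap\Dv_{Q_w,K}$ and $\Iv_{Q_w,L}=H\cap\Iv_{Q_w,K}$ by Theorem~\ref{984}(iii). Since $\kv_P$ is perfect and $\kv_w$ is finite over it (hence also perfect), both $\kv_{Q_w}/\kv_P$ and $\kv_{Q_w}/\kv_w$ are Galois, and by Theorem~\ref{984}(ii) their groups are $\Dv_{Q_w,K}/\Iv_{Q_w,K}$ and $\Dv_{Q_w,L}/\Iv_{Q_w,L}$, the latter sitting inside the former as $(H\cap\Dv_{Q_w,K})\Iv_{Q_w,K}/\Iv_{Q_w,K}$. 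Comparing orders gives $[\kv_w:\kv_P]=[\Dv_{Q_w,K}:(H\cap\Dv_{Q_w,K})\Iv_{Q_w,K}]$, so
\begin{equation*}
\kv_w=\kv_P\iff \Dv_{Q_w,K}=(H\cap\Dv_{Q_w,K})\,\Iv_{Q_w,K}.
\end{equation*}
Conjugating this identity by $g$ rewrites it as $D=I\,(D\cap gHg^{-1})$.

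Finally I would translate the last condition back into the action on $X$: a direct manipulation shows $D=I\,(D\cap gHg^{-1})$ holds if and only if $IgH=DgH$, i.e.\ exactly when $I$ acts transitively on the $D$-orbit of $gH$. Thus the residually trivial valuations correspond to the $D$-orbits on $X$ on which $I$ acts transitively, and these are precisely the $I$-orbits that are stable under $D/I$, which is the definition of $(\Iv_{Q,K}\backslash X)^{\Dv_{Q,K}/\Iv_{Q,K}}$; counting gives the claimed cardinality. I expect the residue-field step to be the main obstacle: one must know that the residue extensions are Galois with the stated decomposition/inertia description, and it is exactly here that perfectness of $\kv_P$ is indispensable, since it guarantees separability of $\kv_{Q_w}/\kv_P$ so that an index of groups computes $[\kv_w:\kv_P]$. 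The rest is bookkeeping — keeping the left/right conventions and the conjugation by $g$ consistent throughout.
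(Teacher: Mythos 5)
Your argument is correct, but it is worth pointing out that the paper contains no internal proof of this proposition at all: its ``proof'' is the single line ``See \cite[Corollary 3.18]{KO7}.'' So your double-coset argument supplies, from the tools the paper does state (Theorem \ref{984}), exactly what the paper outsources to an external reference. The dictionary you build is the natural one and each step checks out: the identification $X\cong G/H$ (surjectivity of $g\mapsto g\circ\sigma_0$ uses normality of $M/K$); the bijection between $\Dv_{Q,K}\backslash X$ and the valuations of $L$ extending $P$ (transitivity of $G$ over $P$, and of $H=\Aut_L(M)$ over a valuation of $L$, the latter legitimate because $M/L$ is again normal); the equivalence $\kv_w=\kv_P\iff\Dv_{Q_w,K}=(H\cap\Dv_{Q_w,K})\,\Iv_{Q_w,K}$; its conjugated form $D=I\,(D\cap gHg^{-1})$; the translation to $DgH=IgH$; and the identification of $D$-orbits with transitive $I$-action with the $D/I$-fixed points of $I\backslash X$ (using that $I$ is normal in $D$). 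What your route buys is a self-contained proof inside the paper; what the paper's route buys is brevity, and presumably the cited corollary is proved by essentially this same orbit-counting argument, so in spirit you are reconstructing it.

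One point deserves flagging, and you flag it yourself: you need $\kv_{Q_w}/\kv_P$ and $\kv_{Q_w}/\kv_w$ to be \emph{Galois}, so that $\#\Aut_{\kv_P}(\kv_{Q_w})=[\kv_{Q_w}:\kv_P]$ and the index computation yields $[\kv_w:\kv_P]$. Perfectness of $\kv_P$ gives separability, and Theorem \ref{984}ii gives $\Dv/\Iv\cong\Aut_{\kv_P}(\kv_{Q_w})$, but neither gives \emph{normality} of the residue extension; that is a standard fact about normal extensions of valued fields (available in the toolkit of \cite{KO7}, where the surjectivity statement you quote lives), yet it is not among the statements reproduced in the paper's Theorem \ref{984}. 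Since you identify this as the load-bearing input rather than silently assuming it, this is a citation gap, not a logical one, and the proof stands.
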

\begin{proof}
See \cite[Corollary 3.18]{KO7}.
\end{proof}

\subsection{Function fields and ramification}

\begin{proposition} \label{389}
Let $K$ be a function field over a perfect field $k$. Let $\overline{k}$ be an algebraic closure of $k$ in an algebraic closure of $K$. The following hold:
\begin{enumerate}
\item Let $M/K$ be a finite normal extension. Then only finitely many primes $P \in \mathcal{P}_{K/k}$ have an extension $Q \in \mathcal{P}_{M/k}$ with the property that $\Iv_{Q,K} \neq 0$.
\item For all primes $Q \in \mathcal{P}_{\overline{k}K/k}$ one has $\Iv_{Q,K}=0$. Furthermore, if $k$ is integrally closed in $K$, then a prime $P \in \mathcal{P}_{K/k}$ has a unique extension to $\mathcal{P}_{\overline{k}K/k}$ if and only if $P \in \mathcal{P}_{K/k}^1$.
\end{enumerate}
\end{proposition}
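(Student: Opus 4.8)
The plan is to prove (i) and (ii) separately. For (i) I would first reduce to the Galois case. Let $K_s$ be the separable closure of $K$ in $M$; restriction of automorphisms identifies $\Aut_K(M)$ with $\Gal(K_s/K)$, since an element of $\Aut_K(M)$ is determined by its action on $K_s$ (as $M/K_s$ is purely inseparable) and every $K$-automorphism of $K_s$ extends to $M$ by normality. Because $K_s/K$ is Galois, Theorem~\ref{984}(iv) makes $\Iv_{Q,K}\to\Iv_{Q|_{K_s},K}$ surjective, and it is injective because the restriction $\Dv_{Q,K}\to\Dv_{Q|_{K_s},K}$ is; hence nonvanishing of $\Iv_{Q,K}$ for $M/K$ is equivalent to nonvanishing of the inertia group for $K_s/K$, and I may assume $M/K$ Galois. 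I would then pick a primitive element $M=K(\alpha)$ with $\alpha$ separable over $K$ and monic minimal polynomial $h\in K[T]$, so that $\disc(h)\neq 0$. At every prime $P$ for which the coefficients of $h$ lie in $\mathcal{O}_P$ and $\disc(h)\in\mathcal{O}_P^{*}$, the extension is unramified, so $\Iv_{Q,K}=0$ for all $Q\mid P$ (residue fields are finite over the perfect field $k$, hence perfect, so trivial inertia is equivalent to $e=1$). The remaining primes are the finitely many poles of the coefficients of $h$ together with the finitely many zeros of $\disc(h)$, which proves (i).

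For (ii) the decisive observation is that a constant field extension is everywhere unramified. Each $c\in\overline{k}$ is integral over $k\subseteq\mathcal{O}_Q$ with integral inverse, so $\overline{k}\subseteq\mathcal{O}_Q^{*}$ and reduction induces a field embedding $\overline{k}\hookrightarrow\kv_Q$. Since $\overline{k}$ is algebraically closed while $\kv_Q$ is finite over the constant field $\overline{k}$ of $\overline{k}K$, this embedding is an isomorphism $\overline{k}\xrightarrow{\sim}\kv_Q$, visibly equivariant for $\Dv_{Q,K}$. If $g\in\Iv_{Q,K}$, then $g$ acts trivially on $\kv_Q$, hence by equivariance trivially on $\overline{k}$; as $g$ also fixes $K$ and $\overline{k}K$ is generated by $\overline{k}$ and $K$, we get $g=\id$. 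Thus $\Iv_{Q,K}=0$. Here I apply the formalism of Theorem~\ref{984} to the infinite normal extension $\overline{k}K/K$, which is permitted.

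For the final assertion, assume $k$ is integrally closed in $K$, so the constant field of $K$ is $k$ and $G=\Aut_K(\overline{k}K)\cong\Gal(\overline{k}/k)$. By Theorem~\ref{984}(i) the number of extensions of $P$ to $\overline{k}K$ is $[G:\Dv_{Q,K}]$; by the triviality of inertia just proved together with Theorem~\ref{984}(ii), the map $\Dv_{Q,K}\to\Aut_{\kv_P}(\kv_Q)$ is an isomorphism, and under the identifications above $\Dv_{Q,K}$ is precisely the subgroup $\Gal(\overline{k}/\kv_P)$ of $\Gal(\overline{k}/k)$, where $\kv_P\subseteq\overline{k}$ via reduction. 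Hence the number of extensions equals $[\kv_P:k]$, which is $1$ exactly when $\kv_P=k$, i.e. when $P\in\mathcal{P}^1_{K/k}$. The hard part, I expect, is this part (ii): one must check that the decomposition/inertia machinery of Theorem~\ref{984} applies cleanly to the infinite extension $\overline{k}K/K$, that $\kv_Q$ genuinely collapses to $\overline{k}$, and that $\Dv_{Q,K}$ coincides with $\Gal(\overline{k}/\kv_P)$ on the nose rather than merely abstractly; by contrast the inseparability reduction in (i) needs care but is standard, and the discriminant bound is routine.
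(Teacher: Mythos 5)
Your proposal is correct and takes essentially the same route as the paper: the paper likewise reduces (i) to the Galois case via unique extension of valuations through the purely inseparable part and then invokes the well-known finiteness of ramified primes, and for (ii) cites Stichtenoth's theorem on constant field extensions. The only difference is that you supply self-contained proofs (the discriminant bound for (i); the equivariant identification $\overline{k} \cong \kv_Q$ and the resulting computation of $\Dv_{Q,K}$ for (ii)) of exactly the standard facts the paper disposes of by citation.
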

\begin{proof}
i. Since valuation extend unique in purely inseparable field extensions, we may assume that $M/K$ is Galois. In that case, the statement is well-known: only finitely many primes ramify.

ii. This follows from \cite[Theorem 3.6.3]{ST}.

\end{proof}

\section{Proofs of the theorems}

\begin{definition}
Let $K$ be a function field over a field $k$ and let $P \in \mathcal{P}^1_{K/k}$. Let $\overline{K}$ be an algebraic closure of $K$. Let $M/K$ be a normal extension of $K$ inside $\overline{K}$ with group $G=\Aut_K(M)$. Let $\overline{k}$ be the algebraic closure of $k$ inside $\overline{K}$. Set $\Gamma=\Aut_k(\overline{k})$. Let $Q$ be an extension of $P$ to $M$. A \emph{Frobenius} for $Q/P$ is a continuous morphism $\Gamma \to G$ such that
that the diagram, where $\psi$ is the natural map,
\[
\xymatrix{
\Gamma \ar[r]^{\varphi} \ar[rd]_{\psi} & G \ar[d]^{\pi} \\
 & \Aut(\overline{k} \cap M/k)\cong \Aut((M \cap (\overline{k}K))/K)=G/N
}  
\]
commutes and such that
$\im(\varphi) \Iv_{Q,K}=\Dv_{Q,K}$. The set of all such Frobenius maps is denoted by $\Frob(Q/P)$. 
\end{definition}

For the next proposition and lemma assume that $M/K$ is a normal extension of function fields over a perfect field $k$ inside $\overline{K}$. Let $G=\Aut_K(M)$. Let $P \in \mathcal{P}_{K/k}^1$ with $Q$ above it in $M$.

Suppose $\varphi \in \Frob(Q/P)$. We use the notation as in the definition of a Frobenius. Consider $\Graph(\varphi) \subseteq \Gamma \times_{G/N} G= \Gal(\overline{k}M/K)$, which is a closed subgroup. Set $M^{\varphi}=(\overline{k}M )^{\Graph(\varphi)}$. Note that $M^{\varphi}/K$ is a finite extension if $M/K$ is finite. Furthermore, $M^{\varphi}$ is geometrically irreducible over $k$ since $k$ is perfect. Finally, we have $\overline{k} M^{\varphi}=\overline{k}M$. 

\begin{proposition} \label{25c8888}
There is $\varphi \in \Frob(Q/P)$ such that $\mathcal{P}^1_{M^{\varphi}/k}$ is not empty. 
\end{proposition}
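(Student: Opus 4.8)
The plan is to pass to the Galois cover $\overline{k}M/K$, exploit the splitting of the decomposition sequence that is available there precisely because the residue field becomes algebraically closed, and then read off a suitable Frobenius directly from that splitting. First I fix an extension $\widetilde{Q}$ of $Q$ to $\overline{k}M$ and record that its residue field is $\kv_{\widetilde{Q}}=\overline{k}$. Indeed, since $P\in\mathcal{P}^1_{K/k}$ the field $k$ is algebraically closed in $K$ (every element of $K$ algebraic over $k$ lies in $\Oc_P$ and maps into $\kv_P=k$), so by Proposition \ref{389}(ii) the prime $P$ has a unique extension to $\overline{k}K$, necessarily with residue field $\overline{k}$; as $\overline{k}M/\overline{k}K$ is algebraic and $\overline{k}$ is algebraically closed, the residue field grows no further and $\kv_{\widetilde{Q}}=\overline{k}$.

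Write $\widetilde{G}=\Gal(\overline{k}M/K)=\Gamma\times_{G/N}G$ and let $\pr_\Gamma,\pr_G$ denote the two projections, i.e.\ the restrictions to $\overline{k}K$ and to $M$. The key input is Theorem \ref{984}(ii) applied to $\widetilde{Q}/P$: because $\kv_{\widetilde{Q}}=\overline{k}$ is algebraically closed, the sequence
\[
0\to \Iv_{\widetilde{Q},K}\to \Dv_{\widetilde{Q},K}\xrightarrow{\ r\ }\Aut_k(\overline{k})=\Gamma\to 0
\]
is exact and right split. Moreover the residue map $r$ coincides with $\pr_\Gamma|_{\Dv_{\widetilde{Q},K}}$: the constant extension $\overline{k}K/K$ is unramified at the unique prime above $P$, with residue field $\overline{k}$ (Proposition \ref{389}(ii)), so its decomposition group is all of $\Gamma$ and acts on $\overline{k}$ through $\Gal(\overline{k}/k)$, compatibly with the tower. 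I then choose a continuous section $s\colon\Gamma\to\Dv_{\widetilde{Q},K}$ of $r$ and set $\varphi\isbydef\pr_G\circ s\colon\Gamma\to G$.

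It remains to check that $\varphi\in\Frob(Q/P)$ and that the place $P'\isbydef\widetilde{Q}|_{M^\varphi}$ is rational. The map $\varphi$ is a continuous homomorphism; the Frobenius diagram commutes because $\pi\circ\pr_G=\psi\circ\pr_\Gamma$ by the definition of the fibre product together with $\pr_\Gamma\circ s=\id$; and $\im(\varphi)\,\Iv_{Q,K}=\Dv_{Q,K}$ follows by applying $\pr_G$ to the decomposition $\Dv_{\widetilde{Q},K}=\Iv_{\widetilde{Q},K}\cdot s(\Gamma)$, using that $\pr_G$ carries $\Dv_{\widetilde{Q},K}$ and $\Iv_{\widetilde{Q},K}$ onto $\Dv_{Q,K}$ and $\Iv_{Q,K}$ (Theorem \ref{984}(iv), the purely inseparable part having unique valuation extensions). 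Since $r=\pr_\Gamma$ on $\Dv_{\widetilde{Q},K}$, the section satisfies $s(\gamma)=(\gamma,\varphi(\gamma))$, so $\Graph(\varphi)=s(\Gamma)\subseteq\Dv_{\widetilde{Q},K}$. By Theorem \ref{984}(iii) this gives $\Dv_{\widetilde{Q},M^\varphi}=\Dv_{\widetilde{Q},K}\cap\Graph(\varphi)=s(\Gamma)$ and $\Iv_{\widetilde{Q},M^\varphi}=\Iv_{\widetilde{Q},K}\cap s(\Gamma)=\{1\}$, the latter because $s$ is a section of $r$ with $\ker r=\Iv_{\widetilde{Q},K}$. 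The residue sequence of Theorem \ref{984}(ii) for $\widetilde{Q}/P'$ then identifies $\Gal(\overline{k}/\kv_{P'})$ with the image of $\Dv_{\widetilde{Q},M^\varphi}=s(\Gamma)$ under the residue action, which is $r(s(\Gamma))=\Gamma$; hence $\kv_{P'}=k$, so $P'\in\mathcal{P}^1_{M^\varphi/k}$ and this set is nonempty.

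The main obstacle is really the whole point of the second paragraph: the splitting in Theorem \ref{984}(ii) exists only when the residue field is algebraically closed, which is exactly what forces the passage from $M$ to $\overline{k}M$, and one must also identify the abstract residue quotient $r$ with the geometric projection $\pr_\Gamma$ so that the chosen splitting is literally the graph of a Frobenius. Once these two identifications are secured, verifying the Frobenius axioms and the equality $\Iv_{\widetilde{Q},M^\varphi}=\{1\}$ is routine bookkeeping with the parts of Theorem \ref{984}.
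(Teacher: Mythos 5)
Your proof is correct and follows essentially the same route as the paper: both pass to $\overline{k}M$, apply the split exact sequence of Theorem \ref{984}(ii) to an extension $\widetilde{Q}$ of $Q$ there (valid because the residue field becomes $\overline{k}$), take a continuous section of the residue map, and observe that its image is the graph of the desired Frobenius $\varphi$, verifying the axioms via Theorem \ref{984}(iv). The only minor divergence is the final rationality check --- the paper deduces $\Dv_{\widetilde{Q},M^{\varphi}}=\Graph(\varphi)=\Gal(\overline{k}M/M^{\varphi})$, hence uniqueness of the prime above $Q'$, and invokes Proposition \ref{389}(ii), whereas you compute $\kv_{P'}=k$ directly from the residue sequence using $\Iv_{\widetilde{Q},M^{\varphi}}=\{1\}$ --- and your explicit identification of the residue map with $\pr_{\Gamma}$ is a detail the paper leaves implicit.
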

\begin{proof}
Let $\overline{Q}$ be an extension of $Q$ to $\overline{k}M$. We have the following split exact sequence (Theorem \ref{984}ii):
\begin{align*}
0 \to \Iv_{\overline{Q},K} \to \Dv_{\overline{Q},K} \to \Gamma \to 0.
\end{align*}
Let $\varphi_0 \colon \Gamma \to \Dv_{\overline{Q},K} \subseteq \Gamma \times_{G/N} G$ be such a splitting. Note that $\im(\varphi_0)$ is the graph of a function $\varphi \colon \Gamma \to \Dv_{Q,K}$. One has $\im(\varphi_0) \Iv_{\overline{Q},K} = \Dv_{\overline{Q},K}$. This gives $\im(\varphi) \Iv_{Q,K}=\Dv_{Q,K}$ as required (Theorem \ref{984}iv). The commutativity of the diagram follows since $\Graph(\varphi)=\im(\varphi_0) \subseteq \Gamma \times_{G/N} G$.

We will show $Q'=\overline{Q}|_{M^{\varphi}} \in \mathcal{P}^1_{M^{\varphi}/k}$. By construction we have $\Dv_{\overline{Q},K} \supseteq \Graph(\varphi)$. Hence we obtain $\Dv_{\overline{Q},M^{\varphi}} = \Dv_{\overline{Q},K} \cap \Gal(M/M^{\varphi}) = \Gal(M/M^{\varphi})$ (Theorem \ref{984}iii). Hence $\overline{Q}$ is the unique prime above $Q'$ in $\overline{k}M$ (Theorem \ref{984}i) and $Q'$ is rational (Proposition \ref{389}ii).
\end{proof}

\begin{lemma} \label{25c9999}
Let $\varphi \in \Frob(Q/P)$. Consider the natural map $\psi \colon \mathcal{P}^1_{M^{\varphi}/k} \to \mathcal{P}^1_{K/k}$. Let $P \in \im(\psi)$ with the property that any extension $Q$ of $P$ to $M$ satisfies $\Iv_{Q,K}=0$. Then there is a prime $Q$ of $M$ above $P$ with $\Dv_{Q,K}=\im(\varphi)$.
\end{lemma}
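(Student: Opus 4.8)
The plan is to lift the entire situation to the Galois extension $\overline{k}M/K$, whose group is $\Gal(\overline{k}M/K)=\Gamma\times_{G/N}G$ and in which $\Graph(\varphi)=\Gal(\overline{k}M/M^{\varphi})$. The goal becomes to locate a prime $\overline{Q}$ of $\overline{k}M$ over $P$ with $\Dv_{\overline{Q},K}=\Graph(\varphi)$; restricting $\overline{Q}$ to $M$ and projecting $\Gal(\overline{k}M/K)\to G$ along the second factor will then hand back a prime $Q$ over $P$ with $\Dv_{Q,K}=\im(\varphi)$, using the surjectivity of decomposition groups (Theorem \ref{984}iv).

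First I would use $P\in\im(\psi)$ to pick $P'\in\mathcal{P}^1_{M^{\varphi}/k}$ with $P'|_K=P$, then an extension $\overline{Q}$ of $P'$ to $\overline{k}M=\overline{k}M^{\varphi}$, and set $Q=\overline{Q}|_M$. Over $M^{\varphi}$ the prime $\overline{Q}$ is essentially forced: since $\kv_{P'}=k$ and $\Iv_{\overline{Q},M^{\varphi}}=0$ by Proposition \ref{389}ii, the exact sequence of Theorem \ref{984}ii for $\overline{k}M/M^{\varphi}$ identifies $\Dv_{\overline{Q},M^{\varphi}}$ with $\Gamma$; as $\overline{k}M^{\varphi}=\overline{k}M$ makes the restriction $\Graph(\varphi)=\Gal(\overline{k}M/M^{\varphi})\to\Gamma$ an isomorphism, this forces $\Dv_{\overline{Q},M^{\varphi}}=\Graph(\varphi)$. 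Intersecting decomposition groups (Theorem \ref{984}iii with $L=M^{\varphi}$) then gives $\Graph(\varphi)=\Dv_{\overline{Q},K}\cap\Graph(\varphi)$, that is, $\Graph(\varphi)\subseteq\Dv_{\overline{Q},K}$.

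The hard part will be upgrading this inclusion to an equality, which I would do by showing that the first projection $\Dv_{\overline{Q},K}\to\Gamma$ is an isomorphism. Its kernel is $\Dv_{\overline{Q},K}\cap N=\Dv_{\overline{Q},\overline{k}K}$ (Theorem \ref{984}iii with $L=\overline{k}K$), and the crux is to see this kernel vanishes. For this I would first check $\Iv_{\overline{Q},K}=0$: restricting it to $M$ lands in $\Iv_{Q,K}=0$ (the standing hypothesis on $P$) and restricting it to $\overline{k}K$ lands in $\Iv_{\overline{Q}|_{\overline{k}K},K}=0$ (Proposition \ref{389}ii), so, since $\Gal(\overline{k}M/K)\subseteq\Gamma\times G$, an element killed by both projections is trivial. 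Consequently $\Iv_{\overline{Q},\overline{k}K}=\Iv_{\overline{Q},K}\cap N=0$; since moreover the residue field $\kv_{\overline{Q}|_{\overline{k}K}}=\overline{k}$ is algebraically closed, Theorem \ref{984}ii for $\overline{k}M/\overline{k}K$ collapses $\Dv_{\overline{Q},\overline{k}K}$ onto $\Iv_{\overline{Q},\overline{k}K}=0$. Surjectivity of $\Dv_{\overline{Q},K}\to\Gamma$ comes from Theorem \ref{984}iv together with $\Dv_{\overline{Q}|_{\overline{k}K},K}=\Gamma$, the latter again from Theorem \ref{984}ii, now using $\kv_P=k$.

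With $\Dv_{\overline{Q},K}\to\Gamma$ an isomorphism and $\Graph(\varphi)\subseteq\Dv_{\overline{Q},K}$ already mapping isomorphically onto $\Gamma$, the two subgroups coincide: $\Dv_{\overline{Q},K}=\Graph(\varphi)$. Projecting along the second factor and invoking the surjectivity $\Dv_{\overline{Q},K}\twoheadrightarrow\Dv_{Q,K}$ of Theorem \ref{984}iv, the image $\im(\varphi)$ of $\Graph(\varphi)$ equals $\Dv_{Q,K}$, which is exactly the assertion.
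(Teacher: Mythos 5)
Your proof is correct and follows essentially the same route as the paper's: both work inside $\Gal(\overline{k}M/K)$, choose a prime $\overline{Q}$ of $\overline{k}M$ lying over a rational prime of $M^{\varphi}$, prove $\Dv_{\overline{Q},K}=\Graph(\varphi)$, and then restrict to $M$ via Theorem \ref{984}iv. The only difference is the order of the two inclusions: the paper first gets $\Dv_{\overline{Q},K}\subseteq\Graph(\varphi)$ from rationality and trivial inertia and then uses surjectivity onto $\Gamma$, while you first get $\Graph(\varphi)\subseteq\Dv_{\overline{Q},K}$ and then show the projection $\Dv_{\overline{Q},K}\to\Gamma$ is an isomorphism --- the same ingredients in mirror order.
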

\begin{proof}
Let $Q''$ be a valuation on $\overline{k}M$ extending $P$ such that $Q'=Q''|_{M^{\varphi}} \in \mathcal{P}^1_{M^{\varphi}/k}$ (Theorem \ref{984}i). Note that $\Iv_{Q'',K}=0$ (this follows from Proposition \ref{389}). Set $Q=Q''|_M$ and $P'=Q''_{\overline{k} K }$. 
As $Q'$ is rational, the natural injective map
\begin{align*}
\Dv_{Q'',K} \cap \Graph(\varphi)=\Dv_{Q'',M^{\varphi}} \cong \Aut(k_{Q''}/k_{Q'}) \to \Dv_{Q'',K} \cong \Aut(k_{Q''}/k)
\end{align*}
is surjective (Theorem \ref{984}iv). Hence we find $\Dv_{Q'',K} \subseteq \Graph(\varphi)$.
The map $\Dv_{Q'',K} \to \Dv_{P',K}=\Gamma$ is surjective (Theorem \ref{984}iv, \cite[Theorem 3.6.3]{ST}). As $\Graph(\varphi)$ is a graph, this shows that $\Dv_{Q'',K}=\Graph(\varphi)$. We deduce $\Dv_{Q,K}=\im(\varphi)$ (Theorem \ref{984}iv).
\end{proof}

\begin{proof}[Proof of Theorem \ref{25c12}]
With the help of Theorem \ref{2c411} we see that equivalently we need to prove the following. Let $L/K$ be a finite extension of function fields over $k$. Assume that the induced map $f_k \colon \mathcal{P}^1_{L/k} \to \mathcal{P}^1_{K/k}$ is not surjective. Show that $\mathcal{P}^1_{K/k} \setminus \im(f_k)|=|k|$.

Let $M$ be a finite normal extension of $K$ such that $X=\Hom_K(L,M) \neq \emptyset$. Assume $P \in \mathcal{P}^1_{K/k}$, $P \not \in \im(f_k)$. Let $Q$ be an extension of $P$ to $M$. Let $\varphi \in \Frob(Q/P)$ with $\mathcal{P}^1_{M^{\varphi}/k} \neq \emptyset$ (Proposition \ref{25c8888}). Since $k$ is a large field, one has $|\mathcal{P}^1_{M^{\varphi}/k}|=|k|$ (\cite[Proposition 5.4.3]{JAR2}). Note that we have $\im(\varphi) \Iv_{Q,K}=\Dv_{Q,K}$. As $P \not \in \im(f_k)$ we conclude from Corollary \ref{1c388} that $\left(\Iv_{Q,K} \backslash X \right)^{\Dv_{Q,K}/\Iv_{Q,K}} = \emptyset$. This implies $X^{\im(\varphi)}=\emptyset$. Consider the map $\psi \colon \mathcal{P}^1_{M^{\varphi}/k} \to \mathcal{P}^1_{K/k}$. Let $T$ be the set of $P' \in \im(\psi)$ such that for any extension $Q'$ of $P'$ to $M$ one has $\Iv_{Q',K}=0$. Then $T$ has cardinality $|k|$ as well (Propostion \ref{389}i, Theorem \ref{984}i). For $P' \in T$ there is a prime $Q'$ of $M$ above $P'$ with $\im(\varphi)=\Dv_{Q',K}$ (Lemma \ref{25c9999}). We find 
\begin{align*}
\left(\Iv_{Q',K} \backslash X \right)^{\Dv_{Q',K}/\Iv_{Q',K}}=X^{\Dv_{Q',K}}=X^{\im(\varphi)}=\emptyset.
\end{align*}
From Corollary \ref{1c388} we conclude that $P' \not \in \im(f_k)$. This finishes the proof.
\end{proof}

\begin{proof}[Proof of Corollary \ref{25c14}]
i: The statement is true if $f$ is constant (\cite[Proposition 5.4.3]{JAR2}). Assume $f \in k(x)$ is not constant. Then $f$ induces a finite morphism $\Ps^1_k \to \Ps^1_k$ and we apply Theorem \ref{25c12}.

ii: This follows from i, since the map sends the point at infinity to the point at infinity.
\end{proof}


\begin{thebibliography}{Pop13}

\bibitem[Jar11]{JAR2}
Moshe Jarden.
\newblock {\em Algebraic patching}.
\newblock Springer Monographs in Mathematics. Springer, Heidelberg, 2011.

\bibitem[Kos14]{KO7}
Michiel Kosters.
\newblock The algebraic theory of valued fields.
\newblock \url{http://arxiv.org/abs/1404.3916}, 2014.
\newblock preprint.

\bibitem[Liu02]{LIU}
Qing Liu.
\newblock {\em Algebraic geometry and arithmetic curves}, volume~6 of {\em
  Oxford Graduate Texts in Mathematics}.
\newblock Oxford University Press, Oxford, 2002.
\newblock Translated from the French by Reinie Ern{\'e}, Oxford Science
  Publications.

\bibitem[Pop13]{POP}
Florian Pop.
\newblock Little survey on large fields.
\newblock 2013.

\bibitem[Sti09]{ST}
Henning Stichtenoth.
\newblock {\em Algebraic function fields and codes}, volume 254 of {\em
  Graduate Texts in Mathematics}.
\newblock Springer-Verlag, Berlin, second edition, 2009.

\end{thebibliography}
\end{document}